\DeclareMathOperator{\sech}{sech}
\newcommand{\Rbb}{\mathbb{R}}
\newcommand{\Sbb}{\mathbb{S}}
\newtheorem{theorem}{Theorem}
\newtheorem{lemma}{Lemma}
\newtheorem{corollary}{Corollary}
\theoremstyle{definition}
\newtheorem{example}{Example}
\newtheorem{remark}{Remark}
\newtheorem{definition}{Definition}
\newtheorem*{acknowledgements}{Acknowledgements}
\numberwithin{figure}{section}
\title[Ruled Ricci surfaces and curves of constant torsion]{Ruled Ricci surfaces and curves of constant torsion}
\author[A. de Carvalho, I. Domingos and R. Santos]{Alcides de Carvalho, Iury Domingos and Roney Santos}
        \address{Universidade Federal de Pernambuco\\
            Departamento de Matem\'atica - CCEN-UFPE\\
            Av. Jornalista An\'ibal Fernandes, S/N, Cidade Universit\'aria,
            50740-560, Recife - PE, Brazil.}	
	\email{alcides.junior@ufpe.br}
	\address{Universidade Federal de Alagoas\\
        Av. Manoel Severino Barbosa S/N,
        57309-005, Arapiraca - AL,
        Brazil.}
        \email{iury.domingos@im.ufal.br}
	\address{Universidade de S\~ao Paulo\\
		Departamento de Matem\'atica\\
        Rua do Mat\~ao, 05508-900, S\~ao Paulo - SP, Brazil.}
	\email{roneypsantos@ime.usp.br}
\keywords{Ricci surfaces, minimal surfaces, ruled surfaces}
\subjclass[2020]{53A10}
\begin{document}

\begin{abstract}
    We show that all non-developable ruled surfaces endowed with Ricci metrics in the three-dimensional Euclidean space may be constructed using curves of constant torsion and its binormal. This allows us to give characterizations of the helicoid as the only surface of this kind that admits a parametrization with plane line of striction, and as the only with constant mean curvature.
\end{abstract}

\maketitle

\section{Introduction}

Although very old, the theory of minimal surfaces remains an active and interesting field in Differential Geometry. These are the surfaces in a Riemannian 3-manifold that are critical points of the area functional. Equivalently, a surface is minimal if its mean curvature vanishes identically.

In 1873, Schl\"afli posed an enduring natural question about isometric immersions that remains unanswered to this day \cite{MR645762, MR1216573}: can every Riemannian surface be locally isometrically embedded in the flat space $\mathbb{R}^3$? A partially affirmative response was achieved in 2003 by Han-Hong \cite{han2003local} contingent upon the behavior of the gradient of the Gauss curvature in the neighborhood of its zeroes.

In 1895, Ricci-Curbastro raised a related question \cite{Ricci-Curbastro}: under what conditions does a Riemannian surface support local minimal isometric embedding in the flat space $\mathbb{R}^3$? He gave a partial answer in the same work for the case of strictly negative Gauss curvature $K$ satisfying $4K = \Delta\log(-K).$

Finally, in 2015, Moroianu-Moroianu \cite{MoroianuRicci} given a full positive answer to the Ricci-Curbastro problem. They showed that a Riemannian surface admits local isometric embedding in $\Rbb^3$ as minimal surface when its Gauss curvature $K$ is non-positive and satisfies
\[K\Delta K - \|\nabla K\|^2 - 4K^3 = 0.\]

The concept of Ricci surface was introduced by Moroianu-Moroianu in their work \cite{MoroianuRicci}. Remember that we call \textit{Ricci surface} a Riemannian surface whose Gauss curvature satisfies the above equation, which is called \textit{Ricci condition}. Furthermore, its metric is referred as \textit{Ricci metric}. Among other results, they proved that $K$ does not change sign, and either $K$ vanishes identically or has only isolated zeroes.

Since Ricci surfaces are an ``intrinsic'' way to see minimal surfaces of $\Rbb^3,$ it is natural the interest in this kind of surfaces. The theory of Ricci surfaces has attracted considerable interest recently and has been developed in several works both from the intrinsic point of view \cite{ambrozio2024intrinsic, daniel2023generalized, MoroianuRicci, zang2022non} and as immersed surfaces in $\Rbb^3$ \cite{domingos2023rotational}.

Our goal in this work is to study ruled Ricci surfaces, namely, ruled surfaces of $\Rbb^3$ whose induced metrics are Ricci. Initially we focus on their metric classification: we aim to identify the conditions under which a certain given family of metrics can be classified as Ricci metrics. Subsequently, we transition to the process of obtaining the immersion of these surfaces. In this direction, we discovered a relation between ruled Ricci surfaces and curves with constant torsion. This correspondence provides a full classification of non-developable ruled Ricci surfaces, and shows that this class of surfaces is as numerous as the class of curves of constant torsion. As consequences of our results, we obtain that the helicoid is the only surface of this kind whose mean curvature is constant, and which may be parametrized with line of striction contained in some plane.

\begin{acknowledgements}
    We are grateful to L. Ambrozio for his valuable comments on this work. A. de Carvalho was partially supported by CNPq grant 150285/2023-0 as well as FAPEAL under the process E:60030.0000000513/2023. I. Domingos was partially supported by the Brazilian National Council for Scientific and Technological Development (CNPq), grant 409513/2023-7. R. Santos was partially supported by grant 2023/14796-3, S\~ao Paulo Research Foundation (FAPESP).
\end{acknowledgements}

\section{Preliminaries}

We remember some properties of ruled surfaces in order to fix notations. Let us consider two curves $\alpha, \beta: I \to \Rbb^3$ such that $\beta(t) \neq 0$ for each $t \in I.$ We call $\Sigma$ a \textit{ruled surface} if it can be parametrized by the immersion $X: I \times J  \to \Rbb^3$ given as
\[X(t,u) = \alpha(t) + u\beta(t).\]
One say that the curves $\{\alpha(t), \beta(t)\}$ \textit{generate} $\Sigma,$ and that $\alpha$ is the \textit{line of striction} corresponding to $\beta$ when they are orthogonal.

Before proceeding, we need to set some properties of the curves generating a ruled surface. As we will see, the non-developable ruled Ricci surfaces we consider admit parametrization whose line of striction has constant non-zero speed. So, let us remember some facts about regular curves in $\Rbb^3.$ For more details, we refer the reader to \cite{manfredo}.

By considering $\alpha$ regular, let us apply a change of parameters $s = s(t)$ where $s$ is the arc length of $\alpha$ and reparametrize $\Sigma$ as
    \[X'(s,u) = \alpha(s) + u\beta(s),\ (s,u) \in I' \times J.\]
    For our purpose, let us assume for now that $\alpha$ is not necessarily orthogonal to $\beta.$ At each point $\alpha(s),$ with $\alpha^{\prime\prime}(s)\neq 0,$ we have associated an orthonormal frame $\{T(s), N(s), B(s)\},$ called \textit{Frenet frame}, that is formed respectively by the unit tangent, the unit normal and the unit binormal vectors of $\alpha$ at $\alpha(s).$ More precisely, we have $\alpha'(s) = T(s),$ $\alpha''(s) = \kappa(s)N(s)$ for a function $\kappa: I' \to \Rbb$ called \textit{curvature} of $\alpha$ and $B(s) = T(s) \wedge N(s).$ These vector fields obey the well-known \textit{Frenet formulas}
    \[T' = \kappa N,\ \ \ N' = -{\kappa T - \tau B},\ \ \ B' = \tau N.\]
    We have omitted the parameter $s$ and the prime means the derivative with respect to $s.$ The function $\tau: I' \to \Rbb$ is called \textit{torsion} of $\alpha.$ For our objective, let's assume that the zeros of the curvature of $\alpha$ are isolated.

    A standard way to change $\alpha$ with the line of striction $\widetilde{\alpha}$ of $\beta$ is as follows. We write $\beta = \beta_1T + \beta_2N + \beta_3B$ and consider
    \[\widetilde{\alpha}(s) = \alpha(s) - h(s)\beta(s),\]
    for some function $h:I' \to \Rbb$ which we want to find. By admitting $\langle \widetilde{\alpha}', \beta'\rangle = 0$ in $I',$ a straightforward computation using Frenet formulas gives that
    \[h(s) = \frac{\beta_1'(s) - \kappa(s)\beta_2(s)}{\|\beta'(s)\|^2}.\]
    In particular, if the original curve $\alpha$ is the line of striction of $\beta,$ we must have
    \begin{equation}\label{beta}
        \beta_1'(s) = \kappa(s) \beta_2(s).
    \end{equation}

From now on, we will not assume that $\alpha$ is necessarily regular. Regarding the curve $\beta,$ we begin by noting that it can be assumed, without loss of generality, that its trace is contained in $\Sbb^2$ and that $\alpha$ is its line of striction (see \cite{manfredo}). Moreover, to be able to develop the theory, we need the non-trivial assumption that $\beta'$ has no zero in $I.$ In the case where the zeroes of \( \beta'\) are isolated, we can divide our surface into pieces such that the theory can be applied to each of them. However, if the zeroes of \( \beta' \) have cluster points, the situation may become complicated and we will avoid it here. The assumption that $\beta$ is a regular curve in $I$ is usually expressed by saying that $\Sigma$ is {\it non-cylindrical.}

Under all considerations above, unless otherwise stated, from now on we shall assume that
\[
X(t, u) = \alpha(t) + u\beta(t)
\]
is a parametrization of a non-cylindrical ruled surface whose generating curves are orthogonal and \( \|\beta(t)\| = \|\beta'(t)\| = 1 \) for all \( t \in I \). The induced metric in this setting is expressed as
\[g = \left(\|\alpha'(t)\|^2 + u^2\right)dt^2 + 2\langle \alpha'(t), \beta(t) \rangle dtdu +du^2.\]
The assumption that $\beta$ is a spherical curve implies that \( \langle \beta(t), \beta'(t) \rangle = 0 \) for all \( t \in I \). Therefore, we get $\langle \alpha'(t), \beta'(t) \rangle = \langle \beta(t), \beta'(t) \rangle = 0,$ and consequently there exists a function $\lambda: I \to \Rbb^3$ such that $\alpha'(t) \wedge \beta(t) = \lambda(t) \beta'(t),$ where $\wedge$ denotes the cross product. This function $\lambda$ is called \textit{distribuition parameter} of $X.$ Since
\[\|\alpha'(t)\|^2 + u^2 - \langle \alpha'(t), \beta(t)\rangle^2 = \|X_t(t,u) \wedge X_u(t,u)\|^2 = \lambda(t)^2 + u^2,\]
without loss of generality $g$ can be rewritten as
\begin{equation}\label{ruledmetric}
g = \left(\|\alpha'(t)\|^2 + u^2\right)dt^2 + 2\sqrt{\|\alpha'(t)\|^2 - \lambda(t)^2} dtdu +du^2.
\end{equation}

We say that a ruled surface is \textit{developable} if its distribution parameter vanishes identicaly. Otherwise, we say that the ruled surface is \textit{non-developable}. The relation between the Gauss curvature $K$ and the distribution parameter $\lambda$ of $X$ is given by the formula
\[K(t,u) = -\frac{\lambda(t)^2}{\left(\lambda(t)^2 + u^2\right)^2}.\]
In particular, the distribution parameter of an immersion of a ruled Ricci surface vanishes either identically or at isolated points due to the property on the zeroes of the Gauss curvature of a Ricci surface. The first case trivially implies that the considered ruled surface must be a Ricci surface. For this reason, we will focus only on the case of non-developable ruled Ricci surfaces.

\section{Ruled Ricci surfaces}

As we will see, the Ricci condition imposes strong restrictions to a certain class of metrics that generalize the one of ruled surfaces. Precisely, on the case of ruled surfaces, we will show that the distribution parameter and the speed of the line of striction of a given parametrization of a ruled Ricci surface are constant and equal, up to sign.

\subsection{Metric classification}\label{metricclassificationsection}

In this section we will obtain necessary and sufficient condition for a class of metrics that have as particular cases the metrics induced by ruled surfaces to be Ricci metrics.

\begin{lemma}\label{metricclassification}
    Let $g$ be a non-flat Riemannian metric given as
    \[g = \left(f(t)^2 + u^2\right)dt^2 + 2\sqrt{f(t)^2 - \lambda(t)^2}dtdu + du^2,\ (t,u) \in I \times J,\]
    where $f, \lambda: I \to \Rbb$ are functions with $f(t)^2 \geq \lambda(t)^2$. Then, $g$ is a Ricci metric if and only if $\lambda(t)^2 = f(t)^2 = c^2$ for some $c>0$ and each $t \in I.$ In particular, we have $g = (c^2+u^2)dt^2 + du^2.$
\end{lemma}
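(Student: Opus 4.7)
The plan is to diagonalize the metric by a change of coordinates, use the resulting simple form to compute the Gauss curvature explicitly, and then view the Ricci condition as a polynomial identity in $u$. Concretely, I would introduce $v = u + \phi(t)$, where $\phi$ is a primitive of $\sqrt{f(t)^{2} - \lambda(t)^{2}}$. This substitution cancels the cross term and yields the diagonal form
\[g = A(t,v)^{2}\,dt^{2} + dv^{2},\qquad A(t,v) = \sqrt{\lambda(t)^{2} + (v-\phi(t))^{2}}.\]
Since the Gauss curvature of any metric of this shape is $K = -A_{vv}/A$ and since $A_{vv} = \lambda^{2}/A^{3}$, setting $u = v - \phi(t)$ gives
\[K = -\frac{\lambda(t)^{2}}{(\lambda(t)^{2}+u^{2})^{2}},\]
which is independent of $f$. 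The non-flat hypothesis is therefore equivalent to $\lambda \not\equiv 0$.

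Next I would exploit the Ricci condition in its logarithmic form $\Delta\log(-K) = 4K$ mentioned in the introduction. Splitting $\log(-K) = 2\log|\lambda| - 4\log A$ and using the Laplace--Beltrami operator
\[\Delta h = \frac{1}{A}\partial_{t}\!\left(\frac{h_{t}}{A}\right) + \frac{1}{A}\partial_{v}(A\,h_{v})\]
of $g$, the key structural observation is that the contribution $-4A_{vv}/A$ appearing in $-4\Delta\log A$ equals exactly $-4\lambda^{2}/A^{4} = 4K$, so it cancels the right-hand side of the Ricci equation. After this cancellation and multiplication by $A^{4}(\lambda^{2}+u^{2})$ to clear denominators, the Ricci equation reduces to a polynomial identity in $u$ of degree four whose coefficients involve only $\lambda$, $\phi$ and their first two derivatives.

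Matching powers of $u$ then gives a small system of ODEs. I expect the $u^{4}$ coefficient to yield $\lambda\lambda'' = (\lambda')^{2}$, the $u^{2}$ coefficient to yield $(\phi')^{2} = \tfrac{5}{4}(\lambda')^{2}$, and the $u^{0}$ coefficient, after substituting the $u^{4}$ relation, to yield $(\phi')^{2} = \tfrac{1}{2}(\lambda')^{2}$. These two incompatible proportionalities force $\lambda' \equiv 0$, and hence $\phi' \equiv 0$, after which the remaining odd-power identities are automatically satisfied. Since $(\phi')^{2} = f^{2} - \lambda^{2}$, this gives $f^{2} \equiv \lambda^{2}$, and the non-flat hypothesis rules out $\lambda \equiv 0$. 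Hence $\lambda \equiv c$ for some $c > 0$ and $f^{2} \equiv c^{2}$, yielding $g = (c^{2}+u^{2})dt^{2} + du^{2}$. The converse is a direct verification.

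The main technical obstacle will be the bookkeeping in the polynomial expansion: computing $A_{t} = (\lambda\lambda' - u\phi')/A$, $A_{tt}$, and $\Delta\log|\lambda|$, assembling them into $\Delta\log(-K)$, and then correctly identifying each coefficient of $u^{k}$ after clearing denominators. The conceptual step that makes the whole strategy succeed is the cancellation of $4K$ against $-4A_{vv}/A$, which turns what is a priori a rational identity in $u$ into a genuinely polynomial obstruction that can be analysed term by term.
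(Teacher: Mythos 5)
Your proposal is correct, and it takes a genuinely different route from the paper. The paper works directly with the non-diagonal metric, computes $\|\nabla K\|^2$ and $K\Delta K$ by brute force, and argues in two stages: first it supposes $\delta=\sqrt{f^2-\lambda^2}\neq 0$ on a subinterval and derives a contradiction from the vanishing of the coefficients of a degree-four polynomial in $u$ (using $c_3=c_1=0$ to force $\lambda'=0$ and then $c_0\neq 0$), concluding $\delta\equiv 0$; afterwards, for the reduced metric $(\lambda^2+u^2)dt^2+du^2$, a second polynomial identity forces $\lambda$ constant. You instead absorb the cross term by the substitution $v=u+\phi(t)$, $\phi'=\delta$, putting the metric in the geodesic form $A^2dt^2+dv^2$ with $A^2=\lambda^2+u^2$, and use the logarithmic form $\Delta\log(-K)=4K$ of the Ricci condition (equivalent to the paper's condition wherever $K<0$), so that $-4A_{vv}/A$ cancels $4K$ exactly. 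I verified your predicted coefficients: the $u^4$ term gives $\lambda\lambda''=(\lambda')^2$, the $u^2$ term gives $4(\phi')^2=5(\lambda')^2$, and the $u^0$ term, after substituting the $u^4$ relation, gives $2(\phi')^2=(\lambda')^2$; together these force $\lambda'=\phi'=0$, and the odd coefficients then vanish automatically, so you obtain $f^2=\lambda^2$ and $\lambda$ constant in a single pass. What your route buys is a visibly lighter computation and a unified treatment of the two conclusions that the paper proves separately; the only hypotheses it adds are harmless (smoothness of $\delta$, automatic since it is a metric coefficient). One point you should make explicit: the logarithmic form is only valid on the open set where $\lambda\neq 0$, so your ODEs make $\lambda$ locally constant there; a one-line continuity argument (a maximal subinterval on which $\lambda$ equals a nonzero constant cannot have an interior endpoint where $\lambda$ vanishes) then shows that non-flatness forces $\lambda$ to be a global nonzero constant --- the paper deals with the analogous issue by invoking that the zeros of $K$ are isolated for non-flat Ricci metrics.
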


\begin{proof}

Suppose $g$ is a Ricci metric. Firstly, we will show $\lambda(t)^2 = f(t)^2$ in $I.$ To see this, let us admit that $\lambda(t_0)^2 \neq f(t_0)^2$ for some $t_0 \in I.$ In this case, by continuity we must have $\lambda(t)^2 \neq f(t)^2$ at some interval $I_0 \subset I$ containing $t_0.$ From now on, for simplicity we will omit the parameter $t$ and use a prime to denote the derivative with respect to $t.$ We restrict ourselves to an open set $U\subset I_0\times J$ on which $K<0$. Thus, the Ricci condition is equivalent to $\Delta\log(-K) = 4K$ on $U$. Notice that the curvature $K$ of $g$ is given as
    \[K = -\frac{\lambda^2}{\left(\lambda^2 + u^2\right)^2}.\]
Since
\[\Delta\log(\lambda^2) = \frac{2 {\left({\left(\lambda \lambda'' - (\lambda')^2\right)} u^2 + \delta \lambda \lambda'u + \lambda^2\left(\lambda\lambda'' - 2(\lambda')^2\right)\right)}}{\lambda^2(\lambda^2 + u^2)^2}\]
and
\[\Delta\log(\lambda^2 + u^2)^2 = -\tfrac{4\left({\left(f f' - \lambda \lambda'\right)} u^3 + \left(2 f^2 - 3  \lambda^2 - (\lambda')^2 - \lambda \lambda''\right)\delta u^2 + \lambda\left(\lambda f f' - \left(6  f^2 - 5  \lambda^2\right) \lambda'\right) u - \lambda^2\left(f^2 + \lambda \lambda'' - 2 (\lambda')^2\right)\delta\right)}{(\lambda^2 + u^2)^3\delta},\]
where $\delta(t) = \sqrt{f(t)^2 - \lambda(t)^2}$, we get that the Ricci condition on $U$ is equivalent to
\begin{equation}\label{polynomial}
    c_4u^4 + c_3u^3 + c_2u^2 + c_1u + c_0 = 0,
\end{equation}
for $c_i:I_0 \to \Rbb,$ $i=0, 1, 2, 3, 4,$ given as
\begin{align*}
    c_4 &= \delta\left(\lambda \lambda'' - (\lambda')^2\right)\\
 c_3 &= \lambda\left(-3\lambda^2\lambda' + f^2\lambda' + \lambda (f^2)'\right)\\
    c_2 &= \delta\lambda^2\left(-4\lambda^2 + 4f^2 - 5(\lambda')^2\right)\\
    c_1 &= \lambda^3\left(9\lambda^2\lambda' - 11f^2\lambda' + \lambda (f^2)'\right)\\
    c_0 &= \delta\lambda^4\left(2\lambda^2 -2f^2 + 2(\lambda')^2 - \lambda\lambda''\right).
\end{align*}
    Note that the left side of equation \eqref{polynomial} is a polynomial with respect to $u.$ Thus, since the coefficients depend only on $t$, we get $c_i = 0$ for $i=0,1,2,3,4.$ By $c_3 = c_1 = 0,$ we get
    \[0 = \left(9\lambda^2\lambda' - 11f^2\lambda' + \lambda (f^2)'\right) - \left(-3\lambda^2\lambda' + f^2\lambda' + \lambda (f^2)'\right) = -12\delta^2\lambda'.\]
    Hence, $\lambda''= \lambda' = 0$ and $c_0 = -4\delta^3\lambda^4 \neq 0$ in $I_0.$ This contradiction shows that $\delta = 0$ in $I.$ Consequently, the metric $g$ is given as
    \[g = \left(\lambda(t)^2 + u^2\right)dt^2 + du^2\]
    and for this metric we get
\[\Delta\log(\lambda^2) = \frac{2\left(\left(\lambda \lambda'' - (\lambda')^2\right) u^2 + \lambda^2\left(\lambda \lambda'' - 2(\lambda')^2\right)\right)}{\lambda^2(\lambda^2 + u^2)^2}\]
and
\[\Delta\log\left(\lambda^2+u^2\right)^2=\frac{4\left(\left(\lambda^2 + \left(\lambda\lambda'\right)'\right) u^2+\lambda^2\left(\lambda^2 - 2\left(\lambda'\right)^2 + \lambda \lambda'' \right)\right)}{(\lambda^2+u^2)^3}.\]   
Once $\lambda \neq 0$ almost everywhere, the Ricci condition in this case is equivalent to
    \begin{equation}\label{polynomial2}
        \left((\lambda')^2 - \lambda\lambda''\right)u^4 + 5\lambda^2(\lambda')^2u^2 - \lambda^4\left(2(\lambda')^2 - \lambda\lambda''\right) = 0.
    \end{equation}
   Since the left hand of equation \eqref{polynomial2} is a polynomial in $u$ whose coefficients are functions of $t,$ as before we conclude that these coefficients vanish. Therefore, the term of second order of \eqref{polynomial2} implies that $\lambda'$ is zero in $I,$ which shows that $\lambda$ is constant. This provides a solution of \eqref{polynomial2} and shows that $\lambda^2 = f^2$ is constant as claimed.

    Reciprocally, assuming $\lambda^2 = f^2$ constant in $I,$ we want to prove that the Ricci condition holds. This is equivalent to show that \eqref{polynomial2} holds, and this can be seen directly.
\end{proof}

\begin{remark}
     An interesting problem in classical Differential Geometry is to know when two surfaces coincide, up to rigid motion of the space. In this direction, the only minimal isometric immersions of $\mathbb{R}^2$, endowed with the Ricci metric of the above result, into $\mathbb{R}^3$ are the ones congruent to the one-parameter associated family with the helicoid (cf. \cite[Theorem 8]{Lawson}).

\end{remark}

\subsection{Ricci condition for ruled surfaces}

A direct consequence of the metric classification in Lemma \ref{metricclassification} says that the distribution parameter of a certain parametrization of a ruled Ricci surface is constant and equals to the speed of its line of striction, up to sign.

\begin{corollary}\label{distributionparameter}
    Let $\Sigma$ be a non-developable ruled surface parametrizated by $X: I \times J \to \Rbb^3$ as
    \[X(t,u) = \alpha(t) + u \beta(t)\]
    such that the curves $\alpha: I \to \Rbb^3$ and $\beta: I \to \Sbb^2$ are orthogonal and $\beta$ is parametrized by arc length. Then, $\Sigma$ is a Ricci surface if and only $\lambda(t)^2 = \|\alpha'(t)\|^2 = c^2$ for some non-zero real constant $c$ and each $t \in I,$ and where $\lambda$ is the distribution parameter of $X.$ In particular, the induced metric of $\Sigma$ is $g = (c^2+u^2)dt^2 + du^2.$
\end{corollary}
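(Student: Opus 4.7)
The plan is to apply Lemma \ref{metricclassification} directly to the induced metric of $X$, since the hypotheses of the corollary have been engineered precisely to match those of the lemma. The whole argument is essentially a bookkeeping exercise.

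The first step is to invoke the computation from the preliminaries leading to \eqref{ruledmetric}: under the standing assumption that $\alpha$ is orthogonal to $\beta$ (so $\alpha$ is the line of striction of $\beta$) and that $\beta$ is a unit-speed spherical curve, the induced metric on $\Sigma$ has the form
\[
g = \bigl(\|\alpha'(t)\|^2 + u^2\bigr)\,dt^2 + 2\sqrt{\|\alpha'(t)\|^2 - \lambda(t)^2}\, dt\, du + du^2.
\]
Setting $f(t) = \|\alpha'(t)\|$, this is exactly the family of metrics treated by Lemma \ref{metricclassification}, and the pointwise bound $f(t)^2 \geq \lambda(t)^2$ follows from the identity $\|\alpha'\|^2 = \lambda^2 + \langle \alpha', \beta\rangle^2$ derived in the preliminaries. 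Moreover, non-developability of $\Sigma$ forces $\lambda \not\equiv 0$, so by the formula $K = -\lambda^2/(\lambda^2 + u^2)^2$ the Gauss curvature $K$ is not identically zero, which supplies the non-flatness hypothesis required by the lemma.

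Applying Lemma \ref{metricclassification} then yields that $g$ is Ricci if and only if $\lambda(t)^2 = f(t)^2 = c^2$ for some constant $c > 0$; since $\lambda \not\equiv 0$, this $c$ is necessarily non-zero. Under this identification the cross term of $g$ vanishes identically and we recover $g = (c^2 + u^2)dt^2 + du^2$, while the converse direction is already contained in the lemma's statement. There is no real obstacle beyond checking that the hypotheses transfer correctly between the two statements; all the hard analysis has already been carried out in Lemma \ref{metricclassification}.
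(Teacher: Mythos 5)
Your proof is correct and follows essentially the same route as the paper: the paper's own argument is exactly to set $f(t)=\|\alpha'(t)\|$ in the ruled-surface metric \eqref{ruledmetric} and invoke Lemma \ref{metricclassification}. Your additional checks (the bound $f^2\geq\lambda^2$ and non-flatness via non-developability) are sound and merely make explicit what the paper leaves implicit.
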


\begin{proof}
    Remember that the induced metric $g$ of $\Sigma$ is written as in \eqref{ruledmetric}. The result follows by considering, in Lemma \ref{metricclassification}, $f(t) = \|\alpha'(t)\|$ and $\lambda$ as the distribution parameter of $X.$
\end{proof}

\begin{remark}
    We point out that in the previous notations for ruled surfaces, since we have $\langle \alpha'(t), \beta(t)\rangle^2 = \|\alpha'(t)\|^2 - \lambda(t)^2,$ the condition stated in Corollary \ref{distributionparameter} for a ruled Ricci surface implies that $\langle \alpha'(t), \beta(t)\rangle$ vanishes in $I.$ However, $\|\alpha'(t)\|^2 = \lambda(t)^2$ in $I$ does not necessarily imply that the surface is a ruled Ricci surface. Indeed, to see this let us consider the following parametrization of a family of ruled surfaces given by
    \[X_w(t,u) = (0, 0, w(t)) + u(\cos(t), \sin(t), 0),\]
    where $w:I \to \Rbb$ is a smooth function. We have $\alpha(t) = (0, 0, w(t))$ and $\beta(t) = (\cos(t), \sin(t), 0).$ Obviously, $\alpha$ is the line of striction of $\beta$ and $\|\beta(t)\| = \|\beta'(t)\| = 1$ for any $t \in I.$ Moreover, $\langle \alpha'(t), \beta(t)\rangle = 0$ and therefore $\lambda(t)^2 = \|\alpha'(t)\|^2$ in $I.$ But $X_w$ is not the parametrization of a ruled Ricci surface in general. In fact, if we admit that $w$ is not an affine function, in this case, $\|\alpha'\|$ (and consequently the distribution parameter of $X_w$) is not constant and, according to Corollary \ref{distributionparameter}, $X_w$ is not a parametrization of a ruled Ricci surface.
    
 This class is known as right conoids and includes helicoids, hyperbolic paraboloids, Plücker conoids, Wallis conicals edges and Whitney umbrellas. As we will see, except for the helicoid, none of them are Ricci surfaces.

\end{remark}

    We will use Corollary \ref{distributionparameter} in order to classify ruled Ricci surfaces in two aspects depending on the geometry of the line of striction for a certain parametrization. For the first one, we consider that the surface may be parametrized in such a way that the line of striction is contained in a straight line. Remember that the immersion
    \[X(t,u) = (0, 0, at + b) + u (\cos(t), \sin(t), 0),\ (t,u) \in I \times J,\]
    for any constants $a$ and $b$ with $a \neq 0$ gives a parametrization of a piece of helicoid. We have the following assertion.

    \begin{theorem}\label{classification1}
    Let $\Sigma$ be a non-developable ruled Ricci surface which can be parametrized so that its line of striction is contained in a straight line. Then, $\Sigma$ is a piece of helicoid.
    \end{theorem}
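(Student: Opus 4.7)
The strategy is to combine Corollary \ref{distributionparameter} with the ensuing remark to rigidify the parametrization, and then use the straight-line hypothesis on $\alpha$ to reduce, via a rigid motion, to the explicit helicoid parametrization recalled just before the theorem statement.

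First, I fix a parametrization $X(t,u) = \alpha(t) + u\beta(t)$ satisfying the standing hypotheses of the paper and such that $\alpha$ lies on a straight line. By Corollary \ref{distributionparameter}, the distribution parameter of $X$ is a nonzero constant $c$ with $\|\alpha'(t)\|^2 \equiv c^2$, and by the remark following it, $\langle \alpha'(t), \beta(t)\rangle \equiv 0$. Writing $\alpha(t) = p_0 + \varphi(t) v$ for some unit vector $v$ and some smooth function $\varphi$, the identity $(\varphi')^2 = c^2$ together with continuity forces $\varphi'$ to be the constant $c$ or $-c$. Absorbing the sign into $t \mapsto -t$ and shifting $t$, I may take $\varphi(t) = ct$.

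Next, I apply a rigid motion of $\Rbb^3$ sending $p_0$ to the origin and $v$ to $e_3$; this replaces $\Sigma$ by a congruent surface, so no generality is lost. Then $\alpha(t) = (0,0,ct)$, the orthogonality $\langle \alpha'(t),\beta(t)\rangle = 0$ becomes $c\beta_3(t) = 0$, so $\beta_3 \equiv 0$. The remaining components satisfy $\beta_1^2 + \beta_2^2 = 1$ and $(\beta_1')^2 + (\beta_2')^2 = 1$, which is the ODE of a unit-speed parametrization of $\Sbb^1$; hence $\beta(t) = (\cos(\varepsilon t + t_0),\sin(\varepsilon t + t_0),0)$ with $\varepsilon = \pm 1$. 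A rotation about the $z$-axis (which fixes $\alpha$) and a further sign-change in $t$ normalize this to $\beta(t) = (\cos t, \sin t, 0)$. Substituting into $X = \alpha + u\beta$ yields exactly the helicoid parametrization quoted before the theorem, with $b = 0$ and $a = c$, and the proof concludes.

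The argument is essentially a verification once the corollary is invoked; the only piece requiring care is bookkeeping the rigid motions and reparametrizations so that aligning $\alpha$ along the $z$-axis still leaves enough residual symmetry (rotations about $e_3$, sign changes in $t$) to normalize $\beta$ to the standard horizontal circle. This is immediate because the stabilizer of the $z$-axis acts transitively on unit vectors orthogonal to $e_3$, which is the only constraint on $\beta$.
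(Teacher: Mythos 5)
Your proof is correct and follows essentially the same route as the paper: normalize by a rigid motion so that the line of striction runs along the $z$-axis, invoke Corollary \ref{distributionparameter} (and the remark after it) to get $\|\alpha'\|$ constant and $\langle\alpha',\beta\rangle=0$, hence $\beta_3\equiv 0$, and then identify $\beta$ as the standard unit-speed circle, recovering the explicit helicoid parametrization. The only cosmetic difference is that you pin down $\beta(t)=(\cos(\varepsilon t+t_0),\sin(\varepsilon t+t_0),0)$ from the unit-speed condition $(\beta_1')^2+(\beta_2')^2=1$, whereas the paper uses the constancy of the distribution parameter $\lambda^2=c^2$ to get $xy'-yx'=\pm 1$; these are equivalent ingredients here.
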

    
    \begin{proof}
    After rigid motions if necessary, we can parametrize $\Sigma$ as
    \[X(t,u) = (0, 0, w(t)) + u(x(t), y(t), z(t)),\ (t,u) \in I \times J,\]
    where $w, x, y, z: I \to \Rbb$ are smooth functions. We can assume that these functions satisfy the conditions $x^2 + y^2 + z^2 = 1$ and $ (x')^2 + (y')^2 + (z')^2 = 1,$ and furthermore $(w')^2 = c^2$ for some positive constant $c$ according to Corollary \ref{distributionparameter}. In particular, we have $w(t) = ct + a$ for some constant $a,$ and once $cz = \langle (0, 0, w'), (x, y, z)\rangle = 0$ then $z = 0$. Therefore, the parametrization $X$ becomes
    \[X(t,u) = (0, 0, ct + a) + u(x(t), y(t), 0),\]
    and consequently its distribution parameter $\lambda$ is given by
    \[\lambda^2 = \langle (0, 0, w') \wedge (x, y, 0), (x', y', 0)\rangle^2 = c^2(xy'- yx')^2.\]
    According to Corollary \ref{distributionparameter}, we have $\lambda^2 = c^2$, which implies $xy' - yx' = 1,$ up to sign. Since $x^2 + y^2 = 1$, we must have $x(t) = \cos\theta(t)$ and $y(t) = \sin\theta(t)$ for some function $\theta: I \to \mathbb{R}$, and thus
    \[1 = xy' - yx' = \cos^2(\theta)\theta'+\sin^2(\theta)\theta' = \left(\cos^2(\theta)+\sin^2(\theta)\right)\theta'=\theta'.\]
    Hence, $\theta(t) = t + b$ for some constant $b,$ which proves our assertion.
\end{proof}

For the second classification result, let us assume that the ruled surface that we consider does not admit parametrization with straight line of striction. We obtain that the surface is generated by a constant torsion curve and its binormal. More precisely, we have the following result.

    \begin{theorem}\label{classification2}
    Let $\Sigma$ be a non-developable ruled surface parametrized by $X:I \times J \to \Rbb^3$ as
    \[X(t,u) = \alpha(t) + u \beta(t)\]
    such that the curves $\alpha: I \to \Rbb^3$ and $\beta: I \to \Sbb^2$ are orthogonal and $\beta$ is parametrized by arc length. In addition, suppose that the curvature of $\alpha$ has no zero in $I.$ Then, $\Sigma$ is a Ricci surface if and only if $\alpha$ has constant torsion $\tau_0 \neq 0$ and $\beta$ is its binormal. Moreover, the constant distribution parameter $\lambda_0$ of $X$ is such that $(\lambda_0\tau_0)^2 = 1.$
    \end{theorem}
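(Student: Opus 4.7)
The plan is to combine Corollary \ref{distributionparameter} with the Frenet apparatus of $\alpha$. If $\Sigma$ is Ricci, that corollary together with the remark immediately following it gives $\|\alpha'(t)\|^2 = \lambda(t)^2 = c^2$ for some constant $c > 0$ and $\langle \alpha', \beta\rangle \equiv 0$. In particular, $\alpha$ is regular with constant speed $c$, and since $\alpha' \wedge \beta = \lambda\beta'$ we also get $\beta' \perp \alpha'$. Setting $T = \alpha'/c$, the triple $\{T, \beta, \beta'\}$ is then an orthonormal frame along $\alpha$ (using $\|\beta\| = \|\beta'\| = 1$).

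For the forward direction, I would set aside the case in which $\alpha$ is a straight line, since that case is already covered by Theorem \ref{classification1}, and work where the curvature $\kappa$ of $\alpha$ has only isolated zeros so that the Frenet frame $\{T, N, B\}$ is well-defined on a dense open set. Since $\beta \perp T$, I expand $\beta = aN + bB$ with $a^2 + b^2 = 1$, and differentiate with respect to the arc length $s$ of $\alpha$ using the Frenet formulas of Section 2. The $T$-component of $d\beta/ds$ works out to $-a\kappa$, which must vanish because $\beta' \perp T$; as the zeros of $\kappa$ are isolated, the identity $a\kappa \equiv 0$ forces $a \equiv 0$, so $\beta = \pm B$. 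The remaining components then reduce $d\beta/ds$ to $\pm \tau N$, and combining $\|d\beta/ds\| = |\tau|$ with $\|d\beta/ds\| = 1/c$ (since $\|\beta'(t)\| = 1$ and $ds/dt = c$) gives $|\tau| \equiv 1/c$, so $\tau$ is a non-zero constant $\tau_0$ satisfying $(\lambda_0 \tau_0)^2 = 1$.

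For the converse, suppose $\alpha$ has constant torsion $\tau_0 \neq 0$ and $\beta = B$ is its binormal. Then $\beta'(t) = \|\alpha'(t)\|\,\tau_0 N$, and the arc-length assumption $\|\beta'\| = 1$ forces $\|\alpha'\| = 1/|\tau_0|$, hence constant. A direct computation of $\alpha'\wedge\beta = -\|\alpha'\|\,N$ against $\beta' = \|\alpha'\|\,\tau_0 N$ identifies the distribution parameter as $\lambda = -1/\tau_0$, so $\lambda^2 = \|\alpha'\|^2$, and Corollary \ref{distributionparameter} concludes that $\Sigma$ is Ricci with $(\lambda_0 \tau_0)^2 = 1$. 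The delicate point throughout lies in the forward direction: the implication $a\kappa = 0 \Rightarrow a \equiv 0$ and the constancy of $\tau$ must be propagated from the open set $\{\kappa \neq 0\}$ to all of $I$ by continuity, while the degenerate case of $\kappa$ identically zero on a subinterval is deferred to Theorem \ref{classification1}.
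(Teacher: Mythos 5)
Your proposal is correct and takes essentially the same route as the paper: Corollary \ref{distributionparameter} (with the remark following it) yields constant speed, constant distribution parameter and $\langle\alpha',\beta\rangle\equiv 0$; expanding $\beta$ in the Frenet frame of $\alpha$ and using the orthogonality/striction relations forces $\beta=\pm B$ (the paper kills the $N$-component via equation \eqref{beta} rather than by differentiating $\beta=aN+bB$, which is the same computation), after which the torsion is pinned to $\tau^2=1/c^2$, and the converse is the same evaluation of $\lambda$ fed back into Corollary \ref{distributionparameter}. Your handling of the degenerate set where $\kappa$ vanishes (isolated zeros, straight-line case deferred to Theorem \ref{classification1}) matches the paper's standing assumptions, so no genuine gap beyond what the paper itself assumes.
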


\begin{proof}
    Suppose $\Sigma$ is a Ricci surface. First, we consider a parametrization $X: I \times J \to \Rbb^3$ of $\Sigma$ given by $X(t,u) = \alpha(t) + u\beta(t)$ for orthogonal curves $\alpha, \beta: I \to \Rbb^3$ with $\|\beta(t)\| = \|\frac{d\beta}{dt}\| = 1.$ Initially, we show that after a reparametrization of $\alpha$ by arc length the surface is generated by $\alpha$ and its binormal. We know by means of Corollary \ref{distributionparameter} that $\alpha$ is regular because $\|\frac{d\alpha}{dt}\|^2 = c^2$ for some $c>0,$ and thus we can reparametrize it by its arc length $s = s(t).$ Without loss of generality, we admit $0 \in I,$ and thus
    \[s(t) = \int_0^t \left\|\frac{d\alpha}{d\sigma}\right\|d\sigma = ct.\]
    Let $(\beta_1, \beta_2, \beta_3)$ be the coordinate functions of $\beta$ with respect to the Frenet frame $\{T, N, B\}$ of $\alpha.$ Since by Corollary \ref{distributionparameter}
    \[0 = \bigg\langle \frac{d\alpha}{ds}, \beta\bigg\rangle = \frac{1}{c} \langle T, \beta_1T + \beta_2N + \beta_3B\rangle = \frac{\beta_1}{c}\]
    one get $\beta_1 = 0.$ Furthermore, since the curvature of $\alpha$ is different from zero everywhere it follows from equation \eqref{beta} that $\beta_2$ vanishes along $I.$ Finally, since the trace of $\beta$ is contained in $\Sbb^2,$ one conclude that $\beta_3^2 = 1.$ This shows that the immersion $X':I'\times J' \to \Rbb^3$ given as
    \[X'(s,u) = \alpha(s) + uB(s)\]
    is a reparametrization of $\Sigma.$ Now, we assert that the torsion $\tau$ of $\alpha$ is constant. Indeed, using the Frenet formulas, we obtain
    \[c^2 = \lambda^2 = \bigg\langle \frac{d\alpha}{dt} \wedge B, \frac{dB}{dt}\bigg\rangle^2 = c^4\tau^2 \langle T \wedge B, N \rangle^2 = c^4\tau^2,\]
    which implies that $\tau^2 = 1/c^2.$

    Conversely, let us suppose that $\Sigma$ is parametrized by an immersion $X: I\times J \to \Rbb^3$ as
    \[X(s,u) = \alpha(s) + uB(s),\ \ \ (s,u) \in I \times J,\]
    where $\alpha: I \to \Rbb^3$ is a regular curve with constant torsion $\tau_0 \neq 0,$ arc length $s$ and binormal $B: I \to \Sbb^2.$ In this case, the arc length $t=t(s)$ of $B$ is given by
    \[t(s) = \int_0^s \left\|\frac{dB}{d\sigma}\right\|d\sigma = |\tau_0|s,\]
    where, whitout loss of generality, we have admited $0 \in I.$ On one hand, we have $\langle \frac{d\alpha}{dt}, B \rangle = 0,$ and consequently $\lambda^2 = \|\frac{d\alpha}{dt}\|^2,$ where $\lambda$ is the distribution parameter of $X.$ On the other hand,
    \[\lambda^2 = \left\langle \frac{d\alpha}{dt} \wedge B, \frac{dB}{dt}\right\rangle^2 = \frac{1}{\tau_0^2}\left\langle T\wedge B, N\right\rangle^2 = \frac{1}{\tau_0^2},\]
    where $\{T, N, B\}$ is the Frenet frame of $\alpha.$ Hence, Corollary \ref{distributionparameter} implies that $\Sigma$ is a Ricci surface.
\end{proof}

\begin{remark}
We point out that Theorems \ref{classification1} and \ref{classification2} suggest a correspondence between curves of constant torsion and non-developable ruled Ricci surfaces. Furthermore, these results are a classification for non-developable ruled Ricci surfaces in the sense that they are a way to obtain this kind of surfaces.
\end{remark}

    By combining our results with the fact that plane curves are characterized as the ones with vanishing torsion, we get the following.

    \begin{corollary}
        The only non-developable ruled Ricci surface which may be parametrized with line of striction contained in a plane is the helicoid.
    \end{corollary}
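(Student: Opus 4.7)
The plan is to combine Theorems \ref{classification1} and \ref{classification2} by showing that, under the planarity hypothesis, the line of striction is forced to be a straight line. Let $\Sigma$ be a non-developable ruled Ricci surface admitting a parametrization $X(t,u) = \alpha(t) + u\beta(t)$ with $\alpha,\beta$ orthogonal, $\beta$ spherical of unit speed, and with the trace of $\alpha$ contained in some affine plane $\Pi$. By Corollary \ref{distributionparameter}, the line of striction $\alpha$ has constant nonzero speed $c$, hence is a regular curve.

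Next I would split on the behavior of the curvature $\kappa_\alpha$ of $\alpha$, using the standing assumption from the Preliminaries that its zeros are isolated. Suppose first that $\kappa_\alpha$ is not identically zero; then there is an open subinterval $I_0 \subset I$ on which $\kappa_\alpha$ never vanishes. Restricting $X$ to $I_0 \times J$ produces a non-developable ruled Ricci surface to which Theorem \ref{classification2} applies, forcing $\alpha|_{I_0}$ to have constant torsion $\tau_0 \neq 0$. On the other hand, since $\alpha(I_0) \subset \Pi$ and the Frenet frame is well defined on $I_0$, the classical plane-curve characterization gives $\tau_\alpha \equiv 0$ on $I_0$, a contradiction. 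Therefore $\kappa_\alpha \equiv 0$ on $I$, so $\alpha$ is an affine parametrization of a straight line.

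With $\alpha$ a straight line, the hypothesis of Theorem \ref{classification1} is satisfied, and that theorem directly yields that $\Sigma$ is a piece of the helicoid. The only subtle point is the interaction with the possible (isolated) zeros of $\kappa_\alpha$: Theorem \ref{classification2} is phrased via the Frenet frame, which degenerates exactly at those points, so the contradiction with the plane-curve characterization must be drawn on an open set of nonzero curvature before globalizing — otherwise the step would be circular. Once this is handled, the argument reduces to a clean dichotomy between the two earlier classification results.
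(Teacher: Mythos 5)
Your argument is correct and is essentially the paper's own: the corollary is obtained there by combining Theorems \ref{classification1} and \ref{classification2} with the fact that plane curves are exactly those of vanishing torsion, which is precisely your dichotomy between $\kappa_\alpha \equiv 0$ (Theorem \ref{classification1}, helicoid) and the constant nonzero torsion forced by Theorem \ref{classification2}. Your extra care in localizing to a subinterval where $\kappa_\alpha \neq 0$ before invoking the Frenet frame is a reasonable tightening of the same argument, not a different route.
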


    Note that, in particular, we have proven that all non-developable ruled Ricci surfaces may be generated by a constant torsion curve and its binormal. In Theorem \ref{classification1} the line of striction is a straight line and its binormal is a great circle, while in Theorem \ref{classification2} the line of striction has contant non-zero torsion and its binormal is not contained in a great circle. In both cases, the spherical curve is the binormal of the line of striction. Furthermore, since the spherical curve is regular, it is direct to see that Theorem \ref{classification2} does not depend on its arc length parameter. This motivates the following.

\begin{definition}
     We call \textit{canonical parametrization} a parametrization of a non-developable ruled Ricci surface generated by $\{\alpha(t), B(t)\}$ where $\alpha$ has constant torsion and $B$ is the binormal of $\alpha.$
\end{definition}

\begin{remark}
Let $\alpha:I \to \Rbb^3$ be a regular curve parametrized by arc length, and let $\{T, N, B\}$ be its Frenet frame. The \textit{tangent surface} and the \textit{normal surface} generated respectively by $\{\alpha(s), T(s)\}$ and by $\{\alpha(s), N(s)\}$ are developable, which means that they have zero Gauss curvature. It follows immediately that they are ruled Ricci surfaces. Furthermore, our Theorem \ref{classification2} says that the \textit{binormal surface} generated by $\{\alpha(s), B(s)\}$ is a canonical parametrization of a non-developable ruled Ricci surface provided that $\alpha$ has constant torsion.
\end{remark}

    The next result relates the mean curvature of a non-developable ruled Ricci surface with the curvature of the line of striction of a canonical parametrization.

    \begin{theorem}
        The mean curvature of a non-developable ruled Ricci surface $\Sigma$ is
        \[H(t,u) = -\frac{\kappa(t)}{2\sqrt{1 + \tau_0^2u^2}},\]
        where $\kappa$ and $\tau_0$ are respectively the curvature and the torsion of the line of striction in the canonical parametrization of $\Sigma.$ In particular, the only non-developable ruled Ricci surface with constant mean curvature is the helicoid.
    \end{theorem}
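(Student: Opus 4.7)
The plan is to compute the mean curvature directly from the canonical parametrization $X(s,u) = \alpha(s) + uB(s)$, where $\alpha$ is arc-length parametrized with Frenet frame $\{T,N,B\}$ and constant torsion $\tau_0$, and then read off the corollary about constant mean curvature from the explicit formula.

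First, I would differentiate using the Frenet formulas $T' = \kappa N$, $N' = -\kappa T - \tau_0 B$, $B' = \tau_0 N$. This gives $X_s = T + u\tau_0 N$ and $X_u = B$, so the first fundamental form is diagonal:
\[
E = 1 + \tau_0^2 u^2,\qquad F = 0,\qquad G = 1.
\]
A short cross-product computation, using $T\wedge B = -N$ and $N\wedge B = T$, yields $X_s\wedge X_u = u\tau_0 T - N$ and hence the unit normal
\[
\nu = \frac{u\tau_0 T - N}{\sqrt{1+\tau_0^2 u^2}}.
\]

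Next I would compute $X_{ss} = -u\tau_0\kappa T + \kappa N - u\tau_0^2 B$, $X_{su} = \tau_0 N$, and $X_{uu} = 0$, and pair them with $\nu$ to obtain the coefficients of the second fundamental form. The telescoping gives
\[
e = \langle X_{ss},\nu\rangle = -\kappa\sqrt{1+\tau_0^2 u^2},\qquad f = -\frac{\tau_0}{\sqrt{1+\tau_0^2 u^2}},\qquad g = 0.
\]
Since $F=0$, the mean curvature reduces to $H = (eG + gE)/(2EG)$, which collapses cleanly to
\[
H(s,u) = -\frac{\kappa(s)}{2\sqrt{1+\tau_0^2 u^2}},
\]
i.e.\ the claimed formula (noting that the $s$-parameter is the canonical one after the trivial rescaling to $t$).

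For the CMC consequence, I would argue as follows: if $H$ is constant equal to $H_0$, then $\kappa(s) = -2H_0\sqrt{1+\tau_0^2 u^2}$ must hold for every $(s,u)$. The left-hand side is independent of $u$ while the right-hand side is not, unless $H_0 = 0$; hence $H \equiv 0$ and $\kappa \equiv 0$. A curve with identically vanishing curvature is a straight line, so the line of striction of the canonical parametrization lies on a line, and Theorem \ref{classification1} forces $\Sigma$ to be a piece of helicoid. The main (and only real) obstacle is bookkeeping care with the paper's sign convention $B' = \tau N$ (rather than the standard $B' = -\tau N$) when computing $X_{ss}$ and the second fundamental form; once that is handled consistently, the computation is routine.
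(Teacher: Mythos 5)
Your proposal is correct and follows essentially the same route as the paper: the same canonical parametrization $X(s,u)=\alpha(s)+uB(s)$, the same Frenet computations giving $X_s=T+\tau_0 uN$, $X_{ss}=\kappa N-\tau_0 u(\kappa T+\tau_0 B)$, the unit normal $(\tau_0 uT-N)/\sqrt{1+\tau_0^2u^2}$, and hence $H=-\kappa/(2\sqrt{1+\tau_0^2u^2})$, with the constant-mean-curvature case forcing $\kappa\equiv 0$ and reducing to the helicoid via Theorem \ref{classification1}. The only cosmetic difference is that you compute the full second fundamental form and use the general formula for $H$, whereas the paper exploits $F=0$, $X_{uu}=0$ directly; this changes nothing of substance.
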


    \begin{proof}
        If $\Sigma$ is a piece of helicoid, there is noting to do. For the remaining case, the proof is a long calculation, and for simplicity we will omit the parameters. By considering the immersion $X:I \times J \to \Rbb^3$ of the ruled Ricci surface $\Sigma$ given by Theorem \ref{classification2}
        \[X(s,u) = \alpha(s) + u B(s),\]
      where $s$ and $B$ are the arc length and the binormal of $\alpha,$ respectively, note that in this parametrization the mean curvature of $\Sigma$ is given as
      \[H = \frac{1}{2}\left(\frac{\langle X_{ss}, N_\Sigma\rangle}{\langle X_s, X_s\rangle} + \langle X_{uu}, N_\Sigma\rangle\right),\]
      where $N_\Sigma$ is the unit normal of $\Sigma.$ If we denote respectively by $\{T, N, B\}$ and $\tau_0$ the Frenet frame and the torsion of $\alpha$ we have
      \[X_s = T + \tau_0uN\ \ \mbox{and}\ \ X_u = B.\]
      Direct computations show that
      \[\langle X_s, X_s \rangle = 1 + \tau_0^2u^2,\ \ X_{ss} = \kappa N - \tau_0 u(\kappa T + \tau_0 B)\ \ \mbox{and}\ \ X_{uu} = 0,\]
      where $\kappa$ is the curvature of $\alpha,$ and that $N_\Sigma$ is given by
      \[N_\Sigma = \frac{X_s \wedge X_u}{\|X_s \wedge X_u\|} = \frac{-N + \tau_0 u T}{\sqrt{1 + \tau_0^2u^2}}.\]
      Consequently, $\langle X_{ss}, N_\Sigma \rangle = - \kappa\sqrt{1+\tau_0^2u^2},$
      which implies that
      \[H = -\frac{\kappa}{2\sqrt{1 + \tau_0^2u^2}}.\]
      Since $\kappa$ does not depend on the parameter of the curve, this shows the formula we stated.
    \end{proof}

\subsection{Constructing ruled Ricci surfaces}

 It is well-known that for a given curve $B: I \to \Sbb^2$ parametrized by arc length and a constant non-zero $\tau_0$ there exists a curve $\alpha: I \to \Rbb^3$ given by
\begin{equation}\label{constanttorsion}
    \alpha(t) = \frac{1}{\tau_0}\int_{t_0}^t B'(\sigma) \wedge B(\sigma)d\sigma
\end{equation}
with constant torsion equals $\tau_0$ provided that $\langle B(t) \wedge B'(t), B''(t)\rangle \neq 0,$ and moreover $B$ is the binormal of $\alpha.$ Conversely, for a given curve with constant non-zero torsion, it may be parametrized by formula \eqref{constanttorsion}. We refer the reader to \cite{MR3089777} and references therein for details.

We present here explicit examples of ruled Ricci surfaces illustrating the construction above.

\begin{example}[Parallel circles] \label{parallelcircles}

The first family we will look is associated with closed curves. Let us consider a family of parallel circles in $\Sbb^2$ of radius $\ell \in (0,1)$ centered at $(0, 0, \sqrt{1-\ell^2}),$ which is parameterized by arc length by the map $B_\ell: [0,2\ell\pi]\to \mathbb{S}^2$ defined as
$$B_\ell(t)=\left(\ell \sin\frac{t}{\ell}, -\ell \cos\frac{t}{\ell}, \sqrt{1-\ell^2}\right).$$
Associated to this family we produce a family of curves with constant torsion equals $1.$ A straightforward calculation using \eqref{constanttorsion} shows that the family of circles $B_\ell$ is associated with the family of $\ell$-helix of constant torsion equals $1$ given by
$$
\alpha_\ell(t)=-\ell\left(\sqrt{1-\ell^2}\cos \dfrac{t}{\ell}, \sqrt{1-\ell^2}\sin \dfrac{t}{\ell}, t\right),
$$
up to translation. Hence, Theorem \ref{classification2} gives that the map $X_\ell: [0, 2\ell\pi] \times J \to \Rbb^3,$ given as
$X_\ell(t,u) = \alpha_\ell(t) + u B_\ell(t)$, is the canonical parametrization of a non-developable ruled Ricci surface for each $\ell \in (0, 1).$ One can see directly that the unit circle in $\Sbb^2$ is associated with a helicoid by making $\ell$ goes to 1, and that the family degenerates in the vertical straight line when $\ell$ goes to zero.

\begin{figure}[!ht]
    \begin{subfigure}
        \centering
        \includegraphics[width=0.32\linewidth]{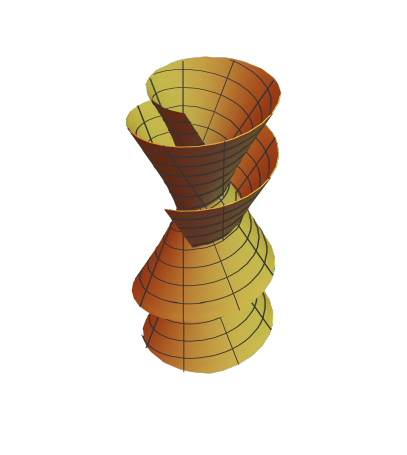}
    \end{subfigure}
    \begin{subfigure}
        \centering
        \includegraphics[width=0.31\linewidth]{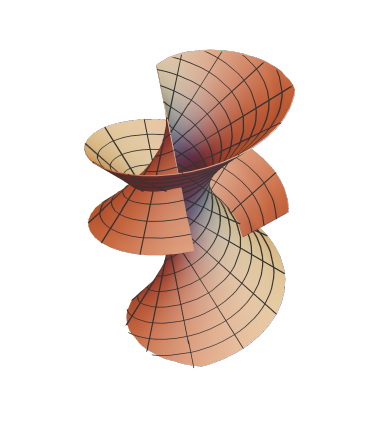}
    \end{subfigure}
    \begin{subfigure}
        \centering
        \includegraphics[width=0.3\linewidth]{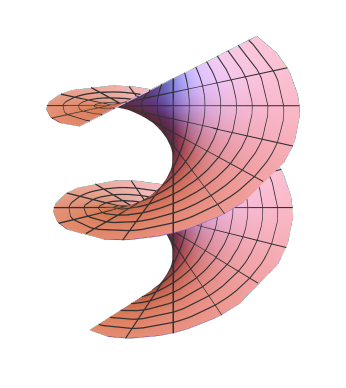}
    \end{subfigure}
    \caption{Non-developable ruled Ricci surfaces associated with circles in $\Sbb^2$ of radius $\frac{1}{2}$, $\frac{3}{4}$ and $1$, respectively.}
    \label{example1}
\end{figure}
\end{example}

\begin{example}[Anti-Salkowski curve]

Now, we will describe a family of ruled Ricci surfaces whose line of striction is defined at most in an open bounded interval. In \cite{salkowski1909zur}, Salkowski constructs a family of curves of constant curvature but non-constant torsion. Later, Monterde demonstrated in \cite{monterde2009salkowski} how to construct, from a curve of constant curvature, another curve of constant torsion. Applying this technique to the family of curves defined by Salkowski, he was able to generate a new family of curves with constant torsion equals $1$ but non-constant curvature, and he referred to this family as anti-Salkowski curves. It may be parametrized by arc length by $\alpha_\ell: \big(-\frac{1}{\ell}, \frac{1}{\ell}\big) \to \Rbb^3$ as
\[\alpha_\ell(t) = 
\begin{bmatrix}
\cos\theta(t) & -\sin\theta(t) & 0\\
\sin\theta(t) & \cos\theta(t) & 0\\
0 & 0 & -1
\end{bmatrix}
\begin{bmatrix}
ca(t)\\
cb(t)\\
\displaystyle\frac{2 \arcsin (\ell t) + \sin (2 \arcsin (\ell t))}{4 \ell \sqrt{1 + \ell^2}}
\end{bmatrix}\]
for
\begin{center}
\begin{tabular}{ll}
 $a(t) = -2 + 3 \ell^2 t^2 + 3 \ell^4 t^2,$ & \ \ \ $\displaystyle c = \frac{\ell}{1 - 2 \ell^2 - 3 \ell^4},$ \\[10pt]
  $b(t) = \sqrt{1 + \ell^2} \left( 1 + 3 \ell^2 \right) t \sqrt{1 - \ell^2 t^2},$ & \ \ \ $\displaystyle \theta(t) = \frac{\sqrt{1 + \ell^2} \arcsin (\ell t)}{\ell},$
\end{tabular}
\end{center}
where $\ell$ is positive and different of $\frac{1}{\sqrt{3}}.$ Its binormal is
\[B_\ell(t) =
\frac{1}{\sqrt{1+\ell^2}}
\begin{bmatrix}
\cos\theta(t) & -\sin\theta(t) & 0\\
\sin\theta(t) & \cos\theta(t) & 0\\
0 & 0 & 1
\end{bmatrix}
\begin{bmatrix}
d(t)\\
\ell^2 t\\
\ell t
\end{bmatrix}\]
for
\[d(t) = -\sqrt{1 + \ell^2} \sqrt{1 - \ell^2 t^2}.\]
Since the torsion of $\alpha$ is $1,$ then $t$ is also the arc length of $B_\ell$. Hence, from Theorem \ref{classification2} the map $X_\ell: \big(-\frac{1}{\ell}, \frac{1}{\ell}\big) \times J \to \Rbb^3,$ given as $X_\ell(t,u) = \alpha_\ell(t) + u B_\ell(t),$ is the canonical parametrization of a non-developable ruled Ricci surface. These surfaces are non-complete since $t$ lies at most in bounded interval. Furhtermore, once $\theta(t) \to t$ as $\ell \to 0,$ we must have that $B_\ell(t)$ becomes a geodesic of $\Sbb^2$ when $\ell$ goes to zero, which means that the family converges to the helicoid in this case. 

\begin{figure}[!ht]
    \begin{subfigure}
        \centering
        \includegraphics[width=0.3\linewidth]{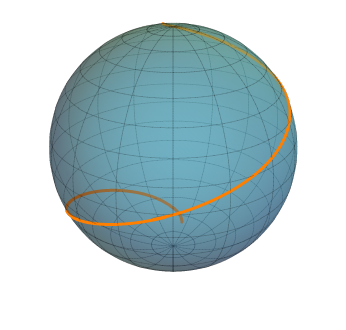}
    \end{subfigure}
    \begin{subfigure}
        \centering
        \includegraphics[width=0.3\linewidth]{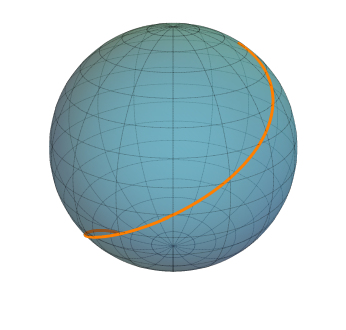}
    \end{subfigure}
    \begin{subfigure}
        \centering
        \includegraphics[width=0.3\linewidth]{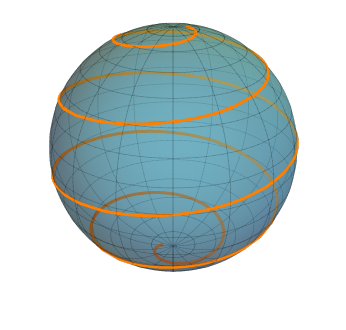}
    \end{subfigure}
    \begin{subfigure}
        \centering
        \includegraphics[width=0.32\linewidth]{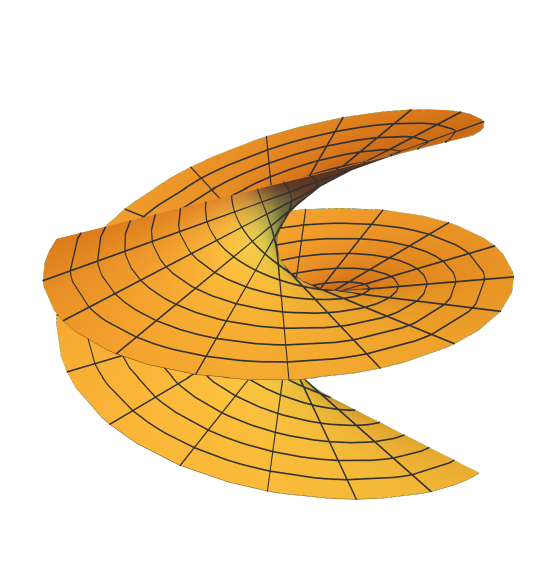}
    \end{subfigure}
    \begin{subfigure}
        \centering
        \includegraphics[width=0.3\linewidth]{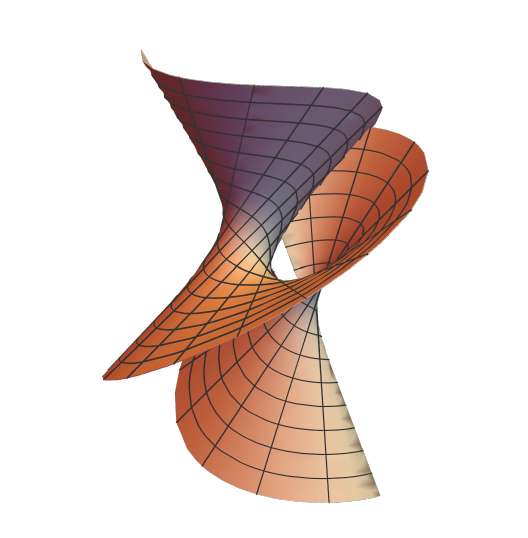}
    \end{subfigure}
    \begin{subfigure}
        \centering
        \includegraphics[width=0.3\linewidth]{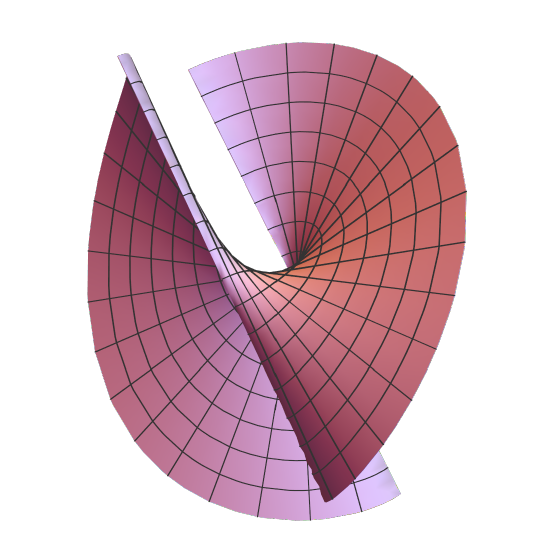}
    \end{subfigure}
    \caption{Spherical curves and non-developable ruled Ricci surfaces for anti-Salkowski curves with $\ell = 0.1$, $\ell = \frac{1}{3}$ and $\ell = 0.57$, respectively.}
    \label{fig:tres-imagens}
\end{figure}
\end{example}

\begin{example}[Borderline spherical curve]

The last ruled Ricci surfaces we present is such that is associated with a curve on the unit sphere whose maximal interval of definition is the real line. Let $B: \Rbb \to \mathbb{S}^2$ be the spherical curve parameterized by arc length as
    \[B(t) = (\tanh(t) \cos(t) , \tanh(t) \sin(t) , \sech(t)).\]
    Notice that $B$ acumulates in the equator as $t \to \pm \infty.$ Direct computations shows that
    \begin{align*}
        B'(t) = \sech^2(t)(\cos(t), \sin(t), 0) + \tanh(t)(-\sin(t), \cos(t), -\sech(t)).
    \end{align*}
    Hence, it follows from formula \eqref{constanttorsion} that the curve
    \[\alpha(t) = (-\cos(t)\sech(t), -\sin(t)\sech(t), \tanh(t) - t)\]
    has constant torsion equals 1 and the map $X: \Rbb^2 \to \Rbb^3,$ given as $X(t,u) = \alpha(t) + uB(t),$ provides the canonical parametrization of a complete ruled Ricci surface.

\begin{figure}[H]
    \begin{subfigure}
        \centering
        \includegraphics[width=0.32\linewidth]{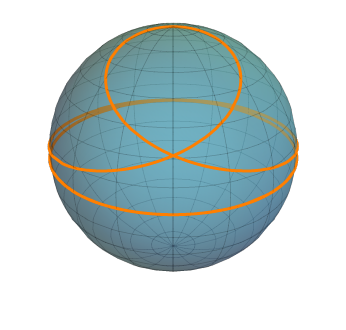}
    \end{subfigure}
    \hspace{2cm}
    \begin{subfigure}
        \centering
        \includegraphics[width=0.31\linewidth]{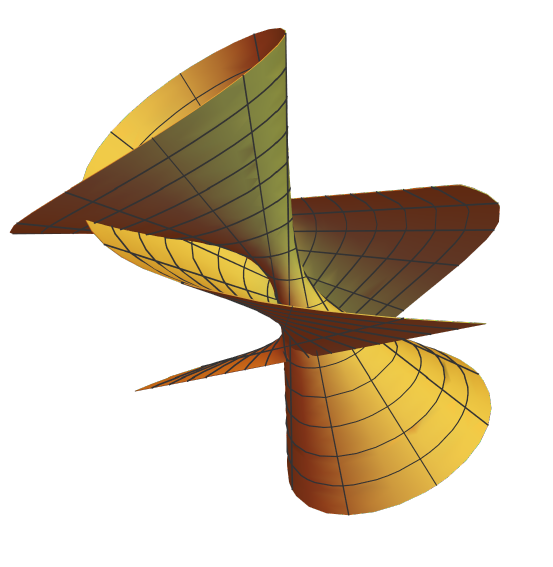}
    \end{subfigure}
    \caption{Borderline spherical curve and its associated non-developable ruled Ricci surface.}
    \label{...}
\end{figure}
\end{example}

\bibliographystyle{amsplain}
\bibliography{references}

\providecommand{\bysame}{\leavevmode\hbox to3em{\hrulefill}\thinspace}
\providecommand{\MR}{\relax\ifhmode\unskip\space\fi MR }
\providecommand{\MRhref}[2]{%
  \href{http://www.ams.org/mathscinet-getitem?mr=#1}{#2}
}
\providecommand{\href}[2]{#2}
\begin{thebibliography}{10}

\bibitem{ambrozio2024intrinsic}
Lucas Ambrozio, \emph{The intrinsic metric of constant mean curvature surfaces
  and minimal hypersurfaces with free boundary}, arXiv preprint
  arXiv:2401.08905 (2024), 1--19.

\bibitem{MR3089777}
Larry~M. Bates and O.~Michael Melko, \emph{On curves of constant torsion {I}},
  J. Geom. \textbf{104} (2013), no.~2, 213--227. \MR{3089777}

\bibitem{daniel2023generalized}
Beno\^{i}t Daniel and Yiming Zang, \emph{Generalized {R}icci surfaces}, J.
  Geom. Anal. \textbf{34} (2024), no.~8, Paper No. 255, 35. \MR{4755559}

\bibitem{manfredo}
Manfredo~P. do~Carmo, \emph{Differential geometry of curves and surfaces},
  Prentice-Hall, Inc., Englewood Cliffs, N.J., 1976, Translated from the
  Portuguese. \MR{0394451}

\bibitem{domingos2023rotational}
Iury Domingos, Roney Santos, and Feliciano Vit{\'o}rio, \emph{Rotational
  {R}icci surfaces}, Annali di Matematica Pura ed Applicata (1923 -) (2024),
  1--19.

\bibitem{han2003local}
Qing Han, Jia-Xing Hong, and Chang-Shou Lin, \emph{Local isometric embedding of
  surfaces with nonpositive {G}aussian curvature}, Journal of Differential
  Geometry \textbf{63} (2003), no.~3, 475--520.

\bibitem{Lawson}
H.~Blaine Lawson, Jr., \emph{Complete minimal surfaces in {$S\sp{3}$}}, Ann. of
  Math. (2) \textbf{92} (1970), 335--374. \MR{270280}

\bibitem{monterde2009salkowski}
J.~Monterde, \emph{Salkowski curves revisited: A family of curves with constant
  curvature and non-constant torsion}, Computer Aided Geometric Design
  \textbf{26} (2009), no.~3, 271--278.

\bibitem{MoroianuRicci}
Andrei Moroianu and Sergiu Moroianu, \emph{Ricci surfaces}, Ann. Sc. Norm.
  Super. Pisa Cl. Sci. (5) \textbf{14} (2015), no.~4, 1093--1118. \MR{3467650}

\bibitem{Ricci-Curbastro}
Gregorio Ricci-Curbastro, \emph{Sulla teoria intrinseca delle superficie ed in
  ispecie di quelle di secondo grado}, Atti R. Ist. Ven. di Lett. \textbf{Arti
  6} (1895), 445--488.

\bibitem{salkowski1909zur}
E.~Salkowski, \emph{Zur transformation von raumkurven}, Mathematische Annalen
  \textbf{66} (1909), no.~4, 517--557.

\bibitem{MR645762}
Shing~Tung Yau, \emph{Problem section}, Seminar on {D}ifferential {G}eometry,
  Ann. of Math. Stud., No. 102, Princeton Univ. Press, Princeton, NJ, 1982,
  pp.~669--706. \MR{645762}

\bibitem{MR1216573}
Shing-Tung Yau, \emph{Open problems in geometry}, Differential geometry:
  partial differential equations on manifolds ({L}os {A}ngeles, {CA}, 1990),
  Proc. Sympos. Pure Math., vol.~54, Amer. Math. Soc., Providence, RI, 1993,
  pp.~1--28. \MR{1216573}

\bibitem{zang2022non}
Yiming Zang, \emph{Non-positively curved {R}icci surfaces with catenoidal
  ends}, Manuscripta Math. \textbf{172} (2023), no.~1-2, 531--565. \MR{4632938}

\end{thebibliography}
\end{document}